\numberwithin{equation}{section}
\theoremstyle{plain}
\newtheorem{thm}{Theorem}[section]
\newtheorem{prop}[thm]{Proposition}
\newtheorem{lem}[thm]{Lemma}
\theoremstyle{remark}
\newtheorem{rem}[thm]{Remark}
\theoremstyle{definition}
\newtheorem{exmp}[thm]{Example}
\numberwithin{equation}{section}
\email{inoue@math.sci.hokudai.ac.jp}
\email{y-kasa@math.sci.hokudai.ac.jp}
\email{punamphartyal@gmail.com}
\begin{document}

\renewcommand{\thefootnote}{\fnsymbol{footnote}}
\footnote[0]{2000\textit{ Mathematics Subject Classification}
Primary 60G25; Secondary 60G15.}

\date{January 17, 2008}
\title[\null]{Baxter's inequality for 
fractional Brownian\\
motion-type processes with Hurst index\\
less than $1/2$}
\author[\null]{Akihiko Inoue, Yukio Kasahara and Punam Phartyal}
\address{Department of Mathematics \\
Faculty of Science \\
Hokkaido University \\
Sapporo 060-0810 \\
Japan}

\keywords{Baxter's inequality, fractional Brownian motion, 
predictor coefficients}

\begin{abstract}
We prove an analogue of Baxter's
inequality for fractional Brownian motion-type processes with
Hurst index less than $1/2$.
This inequality is concerned with the norm estimate of the
difference between finite- and infinite-past predictor
coefficients.
\end{abstract}

\maketitle


\section{Introduction}
\label{sec:1}

To explain Baxter's inequality 
in the classical setup, 
we consider a centered, weakly stationary 
process $(X_k: k\in\mathbf{Z})$, and 
write $\phi_{j}$ and $\phi_{j,n}$ for the infinite- and finite-past 
predictor coefficients, respectively:
\begin{equation}
P_{(-\infty,-1]}X_0=\sum\nolimits_{j=1}^{\infty}\phi_{j}X_{-j},
\qquad
P_{[-n,-1]}X_0=\sum\nolimits_{j=1}^n\phi_{j,n}X_{-j},
\label{eq:1.1}
\end{equation}
where $P_{(-\infty,-1]}X_0$ and $P_{[-n,-1]}X_0$ denote 
the linear least-squares predictors of $X_0$ based on the observed values 
$\{X_{-j}: j=1,2,\dots\}$ and $\{X_{-j}: j=1,\dots,n\}$, respectively. 
There are many models in which $\phi_{j,n}$'s are difficult to compute exactly 
while the computation of $\phi_j$'s are relatively easy. 
In fact, this is usually so for the models with explicit 
spectral density. 
It is known that $\lim_{n\to\infty}\phi_{j,n}=\phi_n$ 
(see, e.g., Pourahmadi, 2001, Theorem 7.14). 
Therefore, it would be natural to approximate 
$P_{[-n,-1]}X_0$ replacing the finite-past predictor coefficients 
$\phi_{j,n}$ by the infinite counterparts 
$\phi_{j}$. 
Then the error can be estimated by
\begin{equation}
\left\Vert P_{[-n,-1]}X_0
-\sum\nolimits_{j=1}^{n}\phi_{j}X_{-j}\right\Vert
\le\Vert X_0\Vert\sum\nolimits_{j=1}^n\vert \phi_{j,n} - \phi_j\vert,
\label{eq:1.2}
\end{equation}
where $\Vert Z\Vert:=E[Z^2]^{1/2}$. 
The question thus arises of estimating the right-hand side of
(\ref{eq:1.2}).
Baxter (1962) showed that for short memory processes, there exists
a positive constant $M$ such that
\[
\sum\nolimits_{j=1}^n\vert \phi_{j,n} - \phi_j\vert
\le M\sum\nolimits_{k=n+1}^{\infty}\vert \phi_k\vert
\qquad\mbox{for all $n=1,2,\dots$}.
\]
This {\it Baxter's inequality} was extended to long memory
processes by  Inoue and Kasahara (2006). 
See also Berk (1974), Cheng and Pourahmadi (1993) and 
Pourahmadi (2001, Section 7.6.2).

In Inoue and Anh (2007), prediction formulas similar to
(\ref{eq:1.1}) were proved for a class of
continuous-time, centered, stationary-increment, Gaussian
processes $(X(t): t\in\mathbf{R})$ that includes fractional
Brownian motion $(B_H(t): t\in\mathbf{R})$ with Hurst index
$H\in (0,1/2)$ (see Section 3 for the definition). For
\begin{equation}
-\infty<t_0\le 0\le t_1<t_2<\infty,
\qquad
t_0<t_1,
\qquad T:=t_2-t_1,\qquad t:=t_1-t_0,
\label{eq:1.3}
\end{equation}
the prediction formulas take the following forms:
\begin{equation}
P_{(-\infty ,t_1]}X(t_2)
=\int_0^\infty\psi(s;T)X(t_1-s)ds,
\qquad 
P_{[t_0,t_1]}X(t_2)
=\int_0^t\psi(s;T,t) X(t_1-s)ds,
\label{eq:1.4}
\end{equation}
where 
$P_{(-\infty ,t_{1}]}X(t_2)$ and $P_{[t_{0},t_{1}]}X(t_2)$ are 
the linear least-squares predictors of $X(t_2)$ based on the infinite past 
$\{X(s): -\infty<s\le t_1\}$ and finite past 
$\{X(s): t_0\le s\le t_1\}$, respectively. 

The aim of this paper is to prove an analogue of Baxter's
inequality for $(X(t))$.
Since $\Vert X(s)\Vert$ depends on $s$, a straightforward analogue
of (\ref{eq:1.2}) is not available. 
Instead, we have
\begin{equation*}
\left\Vert P_{[t_0,t_1]}X(t_2) 
-\int_0^t\psi(s;T)X(t_1-s)ds\right\Vert
\le\int_0^t\{\psi(s;T,t)-\psi(s;T)\}\Vert X(t_1-s)\Vert ds.
\end{equation*}
Here $\psi(s;T,t)>\psi(s;T)>0$ 
(see Section 3 below).
We show that there is a positive constant $M$ such that
\[
\int_0^t\{\psi(s;T,t)-\psi(s;T)\}\Vert X(t_1-s)\Vert ds
\le M\int_t^\infty\psi(s;T)\Vert X(t_1-s)\Vert ds 
\quad\mbox{for all $t\ge t_1$},
\eqno{\mathrm{(B)}}
\]
which we call {\it Baxter's inequality for $(X(t))$}. 
To the best of our knowledge, this type of inequality has not been 
demonstrated before. 
The key ingredient in the proof is the representation of the
difference $\psi(s;T,t) - \psi(s;T)$ 
((\ref{eq:3.2}) with Proposition \ref{prop:3.2} below). 
In fact, we prove a general result that includes (B) 
(Theorem \ref{thm:4.2} (b)).


\section{Fractional Brownian motion}
\label{sec:2}

Throughout the paper, we assume $0<H<1/2$. 
We can define the fractional Brownian motion 
$(B_{H}(t):t\in \mathbf{R})$ with Hurst index $H$ 
by the moving-average representation
\[
B_{H}(t)
=\frac{1}{\Gamma (\frac12+H)}\int_{-\infty }^{\infty}
\left\{((t-s)_{+})^{H-\frac12}-((-s)_{+})^{H-\frac12}\right\}
dW(s)
\qquad
(t\in \mathbf{R}),
\]
where $(x)_+:=\max(x,0)$ and 
$(W(t):t\in \mathbf{R})$ is the ordinary Brownian motion. 
In this section, we study the difference
between the finite- and infinite-past predictor coefficients of 
$(B_{H}(t))$.

Let $t_0,t_1,t_2,t$ and $T$ be as in (\ref{eq:1.3}). 
We define the infinite- and finite-past predictors 
$P_{(-\infty ,t_1]}B_H(t_2)$ and $P_{[t_{0},t_{1}]}B_H(t_2)$ 
of $(B_H(t))$, respectively, as we defined in Section 1 for $(X(t))$. 
The following prediction formulas, that is, (\ref{eq:1.4})
for $(B_H(t))$, were established by Yaglom (1955) and Nuzman and
Poor (2000, Theorem 4.4), respectively (see also
Anh and Inoue, 2004, Theorem 1):
\[
P_{(-\infty ,t_1]}B_H(t_2)
=\int_0^\infty\psi_0(s;T)B_H(t_1-s)ds,
\quad 
P_{[t_{0},t_{1}]}B_H(t_2)
=\int_0^t\psi_0(s;T,t)B_H(t_1-s)ds,
\]
where 
\[
\begin{aligned}
&\psi_0(s;T)
=\frac{\cos(\pi H)}{\pi}\,
\frac{1}{s+T}\left(\frac Ts\right)^{\frac12+H}
\qquad(0<s<\infty),\\
&\psi_0(s;T,t)
=\frac{\cos(\pi H)}{\pi}
\left[\frac{1}{s+T}
\left(\frac Ts\right)^{\frac12+H}
\left(\frac{t-s}{t+T}\right)^{\frac12-H}\right.\\
&\qquad\qquad\qquad 
+\left.
\mbox{$(\frac12-H)B_{\frac t{t+T}}(H+\frac12,1-2H)$}\,
\frac1t
\left\{\left(\frac ts\right)
\left(\frac t{t-s}\right)\right\}^{\frac12+H}
\right]
\quad(0<s<t),
\end{aligned}
\]
with $B_s(p,q):=\int_0^su^{p-1}(1-u)^{q-1}du$ being the
incomplete beta function.

Throughout the paper, 
$f(t) \sim g(t)$ as $t\to \infty$ means 
$\lim_{t\to\infty}f(t)/g(t)=1$. 
A positive measurable function $f$, defined on some 
neighbourhood $[M,\infty)$ of $\infty$, is called {\it regularly 
varying with index $\rho\in\mathbf{R}$}, written $f\in R_{\rho}$, 
if for all $\lambda\in (0,\infty)$, 
$\lim_{t\to\infty}f(t\lambda)/f(t)=\lambda^{\rho}$. 
When $\rho=0$, we say that the function is 
{\it slowly varying}. A generic slowly varying function is usually 
denoted by $\ell$. 
See Bingham et al. (1989) for details. 
The function $\Vert B_H(t_1-s)\Vert$ of $s$ is in $R_H$ since
$\Vert B_H(s)\Vert=\vert s\vert^H\Vert B_H(1)\Vert $ 
for $s\in\mathbf R$.

We will use the next lemma in Section 4. 
For $0<H<\frac12$ and $\rho>-\frac12+H$, we put
\[
C(H,\rho)
:=1-\rho\,\mbox{$B(\frac12-H+\rho,\frac12-H)$}\frac{1-2H}{1+2H},
\]
where $B(p,q):=\int_0^1u^{p-1}(1-u)^{q-1}du$ denotes the beta
function.

\begin{lem}\label{lem:2.1}
Let $g$ be locally bounded in 
$[0,\infty)$ and $g\in R_{\rho}$ with $\rho>-\frac12+H$. 
Then, for fixed $T>0$,
\begin{equation*}
\int_0^t\{\psi_0(s;T,t)-\psi_0(s;T)\}g(s)ds
\sim\frac{C(H,\rho)}{\frac12+H-\rho}\cdot t\,\psi_0(t;T)g(t)
\qquad
(t\to\infty).
\end{equation*}
\end{lem}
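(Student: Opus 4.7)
The plan is to exploit the explicit structure of $\psi_{0}(s;T,t)$ by writing
\[
\psi_{0}(s;T,t)-\psi_{0}(s;T)
=\psi_{0}(s;T)\Bigl[\bigl(\tfrac{t-s}{t+T}\bigr)^{1/2-H}-1\Bigr]
+\frac{\cos(\pi H)(\tfrac12-H)}{\pi\,t}B_{t/(t+T)}(H+\tfrac12,1-2H)\Bigl\{\tfrac{t^{2}}{s(t-s)}\Bigr\}^{1/2+H},
\]
and splitting the integral as $I_{1}(t)+I_{2}(t)$ accordingly. The factor $[(\tfrac{t-s}{t+T})^{1/2-H}-1]$ is negative while the second piece is positive, which is consistent with $\psi_{0}(s;T,t)>\psi_{0}(s;T)$; the main task is to see that the two contributions combine to give an asymptotic of the asserted order $t\psi_{0}(t;T)g(t)$.

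I would then rescale via $s=tu$, $u\in(0,1)$, so each integrand acquires the form $(t\text{-dependent prefactor})\cdot h(u)\cdot g(tu)$ for an explicit $h(u)$. The second piece becomes
\[
I_{2}(t)=\frac{\cos(\pi H)(\tfrac12-H)}{\pi}B_{t/(t+T)}(H+\tfrac12,1-2H)\int_{0}^{1}[u(1-u)]^{-1/2-H}g(tu)\,du,
\]
and the first piece reduces similarly after pulling out the factor $T^{1/2+H}/[(t+T)^{1/2-H}]$. Now I would apply the Uniform Convergence Theorem for regularly varying $g$, passing $g(tu)/g(t)\to u^{\rho}$ under the integral. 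Domination near $u=1$ is immediate from $1/2+H<1$; domination near $u=0$ is handled by combining Potter's bounds (for $tu$ large) with the local boundedness of $g$ on $[0,\infty)$ (for $tu$ small), using precisely that $\rho>-\tfrac12+H$ so that $\int_{0}u^{\rho-1/2-H}du$ converges. In parallel, the incomplete beta function has the expansion $B_{t/(t+T)}(H+\tfrac12,1-2H)=B(H+\tfrac12,1-2H)-(1-2H)^{-1}(T/t)^{1-2H}+o(t^{2H-1})$, and $(\tfrac{t-s}{t+T})^{1/2-H}-1$ has to be expanded analogously to capture its leading correction.

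The key algebraic step is to combine the resulting Beta-function constants with the tail asymptotic $\int_{t}^{\infty}\psi_{0}(s;T)ds\sim t\psi_{0}(t;T)/(\tfrac12+H)$ for the $\psi_{0}(s;T)$ part of $I_{1}$, and to recognise the total coefficient as $C(H,\rho)/(\tfrac12+H-\rho)$. This uses the reflection formula $\Gamma(H+\tfrac12)\Gamma(\tfrac12-H)=\pi/\cos(\pi H)$ together with $\Gamma(\tfrac32-H)=(\tfrac12-H)\Gamma(\tfrac12-H)$ to simplify the products $B(H+\tfrac12,1-2H)\cdot B(\rho+\tfrac12-H,\tfrac12-H)$ into a form where the factor $1-\rho B(\tfrac12-H+\rho,\tfrac12-H)(1-2H)/(1+2H)$ naturally appears.

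The main obstacle is the delicate cancellation between $I_{1}$ and $I_{2}$: both contain bulk contributions whose naive orders (from the change of variables $s=tu$) do not coincide with $t\psi_{0}(t;T)g(t)$, so the leading terms must cancel, and it is a genuine next-to-leading-order computation that reveals the claimed asymptotic. I would handle this by always expanding $B_{t/(t+T)}(\cdot)$ and $(\tfrac{t-s}{t+T})^{1/2-H}$ to one order beyond their leading behaviour, keeping errors of size $o(t\psi_{0}(t;T)g(t))$ uniformly in $u$, and only at the end collecting the surviving coefficient and checking by direct Beta-function manipulation that it equals $C(H,\rho)/(\tfrac12+H-\rho)$.
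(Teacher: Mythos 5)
Your decomposition of $\psi_0(s;T,t)-\psi_0(s;T)$ into $I_1+I_2$, the rescaling $s=tu$, and the Potter-type domination are exactly the route the paper takes, so the skeleton is right. The genuine gap is in your treatment of the incomplete-beta factor and the resulting claim that the leading terms of $I_1$ and $I_2$ must cancel at next-to-leading order. The asymptotic that the paper's proof uses (and that makes the lemma true) is $B_{\cdot}(H+\tfrac12,1-2H)\sim(\tfrac12+H)^{-1}(T/t)^{1/2+H}$ as $t\to\infty$, i.e.\ this factor is itself of order $t^{-1/2-H}$ — the behaviour of the incomplete beta function at a \emph{small} argument of order $T/t$, not the expansion $B(H+\tfrac12,1-2H)-(1-2H)^{-1}(T/t)^{1-2H}+o(t^{2H-1})$ around the complete beta function that you write down. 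With the correct asymptotic, $I_2(t)\asymp t^{-1/2-H}g(t)\asymp t\,\psi_0(t;T)g(t)$ already has the target order, and so does $I_1(t)$, because the bracket $(1-u)^{1/2-H}-1$ vanishes like $u$ at $u=0$ and tames the $u^{-3/2-H}$ singularity of $\psi_0(tu;T)/\psi_0(t;T)$. After normalising by $t\,\psi_0(t;T)g(t)$, each piece converges by dominated convergence to an explicit constant — $\{1-(\tfrac12-H)B(\tfrac12-H+\rho,\tfrac12-H)\}/(\tfrac12+H-\rho)$ and $(\tfrac12-H)B(\tfrac12-H+\rho,\tfrac12-H)/(\tfrac12+H)$ respectively — whose sum is $C(H,\rho)/(\tfrac12+H-\rho)$. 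There is no cancellation and no second-order expansion anywhere.

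Under your reading, $B_{\cdot}(H+\tfrac12,1-2H)\to B(H+\tfrac12,1-2H)>0$ would force $I_2(t)\sim\mathrm{const}\cdot g(t)$, larger than the target $t\,\psi_0(t;T)g(t)\asymp t^{-1/2-H}g(t)$ by a factor $t^{1/2+H}$, while $I_1(t)=O(t^{-1/2-H}g(t))$; nothing is available to cancel that leading term, so the plan as described cannot produce the stated asymptotic — the ``delicate cancellation'' is a phantom created by evaluating the incomplete-beta factor in the wrong regime. A secondary point: splitting $I_1$ further into $\int_0^t\psi_0(s;T)(\tfrac{t-s}{t+T})^{1/2-H}g(s)\,ds$ minus $\int_0^t\psi_0(s;T)g(s)\,ds$ and invoking the tail asymptotic of $\int_t^\infty\psi_0(s;T)g(s)\,ds$ manufactures two terms of constant order whose difference is the small quantity you want (and the second integral need not even converge as $t\to\infty$, since Lemma 2.1 does not assume $\rho<\tfrac12+H$); keeping the bracket intact avoids this entirely. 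The remaining ingredients of your outline (domination near $u=0$ from $\rho>-\tfrac12+H$, integrability at $u=1$ from $\tfrac12+H<1$, beta-function identities to recognise $C(H,\rho)$) are sound and coincide with the paper's argument.
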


\begin{proof}
If $t$ is large enough, then $g(t)>0$. For such $t$, we have, 
by simple computation, 
\[
\begin{split}
&\frac1{t\,\psi_0(t;T)g(t)}\int_0^t\{\psi_0(s;T,t)-\psi_0(s;T)\}g(s)ds
=\int_0^1\frac{\psi_0(ts;T,t)-\psi_0(ts;T)}{\psi_0(t,T)}
\frac{g(ts)}{g(t)}ds\\
&\qquad\qquad
=\int_0^1\mathrm{I}(s;T,t)\frac{g(ts)}{g(t)}ds
+\int_0^1
\mathrm{II}(s;T,t)\frac{g(ts)}{g(t)}ds,
\end{split}
\]
where
\[
\begin{split}
&\mathrm{I}(s;T,t)
=s^{-\frac12-H}\frac{1+(T/t)}{s+(T/t)}
\left[\left(\frac{1-s}{1+(T/t)}\right)^{\frac12-H}-1\right],\\
&\mathrm{II}(s;T,t)
=\mbox{$(\frac12-H)B_{\frac t{t+T}}(H+\frac12,1-2H)$}
(t/T)^{\frac12+H}
\{1+(T/t)\}\,\{s(1-s)\}^{-\frac12-H}.
\end{split}
\]
Since 
$B_{t/(t+T)}(H+\tfrac12,1-2H)\sim \left(\tfrac12+H\right)^{-1}
(T/t)^{\frac12+H}$ as $t\to\infty$, 
we easily see that, for $0<s<1$,
\[
\vert\mathrm{I}(s;T,t)\vert\leq\mbox{const.}\times s^{-\frac12-H},
\qquad 
\vert\mathrm{II}(s;T,t)\vert
\leq\mbox{const.}\times\{s(1-s)\}^{-\frac12-H}
\qquad \mbox{($t$ large enough)}.
\]
Put $\delta=\frac12(\frac12-H+\rho)>0$. 
Then, for $0<s<1$, also we have
\begin{equation}
\vert g(ts)/g(t)\vert\leq 2s^{\rho-\delta}
\quad\mbox{($t$ large enough)}
\label{eq:2.1}
\end{equation}
(cf.~Bingham et al., 1989, Theorem 1.5.2).
Therefore, the dominated convergence theorem yields, 
as $t\to\infty$,
\begin{align}
&\int_0^1\mathrm{I}(s;T,t)\frac{g(ts)}{g(t)}ds
\to\int_0^1\frac{(1-s)^{\frac12-H}-1}{s^{\frac32+H-\rho}}ds
=\frac{1-(\frac12-H)B(\frac12-H+\rho,\frac12-H)}{\frac12+H-\rho},
\label{eq:2.2}\\
&\int_0^1\mathrm{II}(s;T,t)\frac{g(ts)}{g(t)}ds
\to\frac{(\frac12-H)B(\frac12-H+\rho,\frac12-H)}{\frac12+H}.
\label{eq:2.3}
\end{align}
In (\ref{eq:2.2}), we have used integration by parts. 
From (\ref{eq:2.2}) and (\ref{eq:2.3}), we obtain the lemma.
\end{proof}

\begin{rem}\label{rem:2.2}
From Lemma \ref{lem:2.1} with $g(t)=\Vert B_H(t_1-t)\Vert$, 
whence $\rho=H$, we see that
\begin{equation*}
\begin{aligned}
\int_0^t\{\psi_0(s;T,t)-\psi_0(s;T)\}\Vert B_H(t_1 - s)\Vert ds
\sim \frac{2}{\pi}\cos(\pi H)C(H,H)T^{\frac{1}{2}+H}
\Vert B_H(1)\Vert&\cdot t^{-\frac{1}{2}}\\
&
(t\to\infty).
\end{aligned}
\end{equation*}
It is interesting that the order of decay here 
is $t^{-1/2}$, whence does not depend on $H$.
\end{rem}


\section{Fractional Brownian motion-type processes}
\label{sec:3}

In this and next sections, we consider the predictor coefficients 
for the fractional Brownian motion-type process 
$(X(t):t\in\mathbf{R})$ in Inoue and Anh (2007). 
It is a stationary-increment Gaussian process defined by
\[
X(t)=\int_{-\infty}^{\infty}\left\{c(t-s)-c(-s)\right\}dW(s),
\qquad
(t\in\mathbf{R}),
\]
where the moving-average coefficient $c$ is a function of the
form
\[
c(t)=\int_{0}^{\infty}e^{-ts}\nu(ds)\quad(t>0),\qquad =0\quad (t\le 0)
\]
with $\nu$ being a Borel measure on $(0,\infty)$ satisfying 
$\int_{0}^{\infty}(1+s)^{-1}\nu(ds)<\infty$. 
We also assume
\begin{align*}
&\lim_{t\to0+}c(t)=\infty,\qquad 
\mbox{$c(t)=O(t^q)\quad (t\to0+)$\quad for some $q>-1/2$,}\\
&c(t)
\sim
\frac{1}{\Gamma(\frac{1}{2}+H)}t^{-(\frac{1}{2}-H)}\ell(t)
\qquad
(t\to\infty),
\end{align*}
where $\ell(\cdot)$ is a slowly varying function and 
$H\in (0,1/2)$.

The process $(X(t))$ also has 
the autoregressive coefficient $a$ 
defined by $a(t):=-(d\alpha/dt)(t)$ for $t>0$, 
where $\alpha$ is the unique function on $(0,\infty)$ satisfying 
\[
-iz\left(\int_{0}^{\infty}e^{izt}c(t)dt\right)
\left(\int_{0}^{\infty}e^{izt}\alpha(t)dt\right)=1\qquad 
(\Im z>0).
\]
We know that $a(t)=\int_{0}^{\infty}e^{-ts}s\mu(ds)$ for 
some Borel measure $\mu$ on $(0,\infty)$ 
(see Inoue and Anh, 2007, Corollary 3.3).
In particular, $a$ is also positive and decreasing on
$(0,\infty)$. 
By Inoue and Anh (2007, (3.12)), we have 
\begin{equation}
a(t)\sim \frac{t^{-(\frac32+H)}}{\ell(t)}
\cdot\frac{(\frac12+H)}{\Gamma(\frac12-H)}
\qquad(t\to\infty).
\label{eq:3.1}
\end{equation}

\begin{exmp}\label{exmp:3.1}
If $\nu$ is given by 
$\nu(ds)=\pi^{-1}\cos(\pi H)s^{-(\frac12+H)}ds$ on $(0,\infty)$, 
then 
$c(t)=t^{-(\frac12-H)}/\Gamma(\frac12+H)$ for $t>0$, 
whence $(X(t))$ reduces to $(B_H(t))$. 
In this case, $a(t)=t^{-(\frac32+H)}(\frac12+H)/\Gamma(\frac12-H)$.
\end{exmp}

We refer to Inoue and Anh (2007, Example 2.6) for another 
example of $(X(t))$ which has two different indexes 
$H_0$ and $H$ describing 
its path properties and 
long-time behaviour, respectively.

We put
\[
b(s,u):=\int_0^uc(u-v)a(s+v)dv
\qquad (s,u>0).
\]
For $k=1,2\ldots$ and $s,t,T>0$, we define $b_k(s,t;T)$
iteratively by
\[
b_1(s;T,t)
:=b(s,T),
\qquad 
b_k(s;T,t)
:=\int_{0}^{\infty}b(s,u)b_{k-1}(t+u;T,t)du 
\qquad (k=2,3,\dots).
\]
Note that $b_k$'s are positive because both $c$ and $a$ are so. 
By Inoue and Anh (2007, Theorems 3.7 and 1.1), 
the infinite- and finite-past predictor coefficients 
$\psi(s;T)$ and $\psi(s;T,t)$ in (\ref{eq:1.4}) are 
given, respectively, by
\[
\begin{aligned}
&\psi(s;T)=b(s,T)=b_1(s;T,t)
\quad
(s>0),\\
&\psi(s;T,t)
=\sum_{k=1}^{\infty}\left\{b_{2k-1}(s;T,t)
+b_{2k}(t-s;T,t)\right\}
\quad(0<s<t).
\end{aligned}
\]
Notice that 
$\psi(s;T,t)$ here corresponds to $h(t-s;T,t)$ in
Inoue and Anh (2007). 
We have
\begin{equation}
\psi(s;T,t)-\psi(s;T)
=\sum_{k=1}^{\infty}
\left\{b_{2k}(t-s;T,t)+b_{2k+1}(s;T,t)\right\}
\qquad(0<s<t),
\label{eq:3.2}
\end{equation}
which plays a key role in the proof of Baxter's inequality (B) 
in the next section. 

To prove Baxter's inequality (B), we need to discuss the following. 
Consider
\[
\beta(t)
:=\int_0^\infty c(v)a(t+v)dv
\qquad
(t>0),
\]
and define $\delta_k(t,u,v)$ for $k=1,2,3,\ldots$ and $t,u,v>0$,
iteratively by
\[
\delta_1(t,u,v)
:=\beta(t+u+v),
\qquad
\delta_k(t,u,v)
:=\int_0^\infty\beta(t+v+w)\delta_{k-1}(t,u,w)dw
\qquad(k=2,3,\ldots).
\]

\begin{prop}\label{prop:3.2}
\label{prop:5.2} For $s,t,T>0$ and $k\ge2$,
\[
b_k(s;T,t)
=\int_0^Tc(T-v)dv\int_{0}^{\infty}a(s+u)\delta_{k-1}(t,u,v)du.
\]
\end{prop}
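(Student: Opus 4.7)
The plan is to prove the identity by induction on $k\ge 2$, after first isolating two preliminary facts. The first is an auxiliary identity: for all $s,r>0$,
\[
\int_0^\infty b(s,u)a(r+u)du = \int_0^\infty a(s+w)\beta(r+w)dw.
\]
It follows by expanding $b(s,u)=\int_0^u c(u-v)a(s+v)dv$, swapping the order of the two integrations by Tonelli, and substituting in the inner integral; the $c$-integration then produces $\beta(r+w)$. Positivity of $a,c,\beta$ justifies every Tonelli swap used throughout the proof. The second is the symmetry $\delta_k(t,u,v)=\delta_k(t,v,u)$ for all $k\ge 1$, which I would prove by induction on $k$: the base case $\delta_1(t,u,v)=\beta(t+u+v)$ is clear, and the step follows by viewing $\delta_k$ as the kernel of the $k$-th power of the self-adjoint integral operator on $(0,\infty)$ with kernel $\beta(t+u+v)$ (or, concretely, by a Tonelli reordering using the inductive hypothesis on $\delta_{k-1}$).

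With these tools, the main induction is straightforward. Base case $k=2$: substitute $b(t+u,T)=\int_0^T c(T-v)a(t+u+v)dv$ into $b_2(s;T,t)=\int_0^\infty b(s,u)b(t+u,T)du$, swap the $u,v$ integrations, and apply the auxiliary identity with $r=t+v$ to rewrite the inner integral as $\int_0^\infty a(s+w)\beta(t+v+w)dw=\int_0^\infty a(s+w)\delta_1(t,w,v)dw$. Inductive step: plug the formula for $k-1$ into $b_k(s;T,t)=\int_0^\infty b(s,u)b_{k-1}(t+u;T,t)du$, swap orders, apply the auxiliary identity with $r=t+u'$, and rearrange by Tonelli into
\[
\int_0^T c(T-v)dv\int_0^\infty a(s+w)dw\int_0^\infty\beta(t+u'+w)\delta_{k-2}(t,u',v)du'.
\]

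The main obstacle is that this inner $u'$-integral does not directly match the definition $\delta_{k-1}(t,w,v)=\int_0^\infty\beta(t+v+u')\delta_{k-2}(t,w,u')du'$: the variables $v$ and $w$ are interchanged between the $\beta$-factor and the second slot of $\delta_{k-2}$. Applying the symmetry $\delta_{k-2}(t,u',v)=\delta_{k-2}(t,v,u')$ and then the symmetric form of the definition of $\delta_{k-1}$ (afforded by the symmetry of $\delta_{k-1}$) identifies the inner integral as $\delta_{k-1}(t,w,v)$, closing the induction.
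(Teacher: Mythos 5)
Your proof is correct. The paper itself gives no argument here --- it only points to the discrete-time analogue in Inoue and Kasahara (2006, Theorem 2.8), whose proof is an induction of exactly this shape --- and your self-contained continuous-time version fills that gap faithfully: the auxiliary identity $\int_0^\infty b(s,u)a(r+u)\,du=\int_0^\infty a(s+w)\beta(r+w)\,dw$ checks out by the Tonelli swap over $\{0<v<u\}$ and the substitution $u=v+w$, and you correctly spotted that the definition of $\delta_{k-1}$ pairs the $\beta$-factor with the \emph{second} slot, so that the symmetry $\delta_j(t,u,v)=\delta_j(t,v,u)$ (which does hold, since the $\delta_j$ are the iterated kernels of the symmetric kernel $\beta(t+u+v)$) is genuinely needed to close the induction rather than a cosmetic relabeling.
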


This can be proved in the same as in 
Inoue and Kasahara (2006, Theorem 2.8); we omit the proof.

Next, we give some results on the asymptotic behaviour of
$\delta_k$'s. 
For $k=1,2,\ldots$ and
$u\ge 0$, we define $f_k(u)$ iteratively by
\[
f_1(u):=\frac1{\pi(1+u)},
\qquad
f_k(u):=\int_0^\infty \frac{f_{k-1}(u+v)}{\pi(1+v)}dv
\qquad(k=2,3,\ldots).
\]

\begin{prop}
\label{prop:3.3}
\begin{enumerate}
\item[\rm (a)]
For $r\in (1,\infty)$, there exists $N>0$ such that 
$0<\delta_k(t,u,v)\le f_k(0)\{r\cos(\pi H)\}^kt^{-1}$ for 
$u,v>0,\ k\in\mathbf{N},\ t\ge N$. 
\item[\rm (b)]
For $k\in\mathbf{N}$ and $u, v>0$,
$\delta_k(t,tu,v)\sim t^{-1}f_k(u)\cos^{k}(\pi H)$
as $t\to\infty$.
\end{enumerate}
\end{prop}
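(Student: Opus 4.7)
\smallskip
\noindent\textbf{Proof plan.} Both parts rest on the fundamental asymptotic
\[
\beta(t) \sim \frac{\cos(\pi H)}{\pi t} \qquad (t \to \infty).
\]
I would prove this first: substituting $v = tw$ in $\beta(t) = \int_0^\infty c(v)a(t+v)\,dv$ gives $\beta(t) = t\int_0^\infty c(tw)\,a(t(1+w))\,dw$. Inserting the stated asymptotics for $c$ and $a$, the slowly varying factors cancel at leading order by uniform convergence of $\ell$. The remaining integral is $\int_0^\infty w^{H-1/2}(1+w)^{-H-3/2}\,dw = B(\tfrac12+H, 1) = 1/(\tfrac12+H)$, and the reflection identity $\Gamma(\tfrac12+H)\Gamma(\tfrac12-H) = \pi/\cos(\pi H)$ then gives the claim.

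For part (a): since $a$ is positive and decreasing, so is $\beta$. Given $r>1$, the asymptotic produces $N$ with $\beta(s)\le r\cos(\pi H)/(\pi s)$ for all $s\ge N$. Set $\tilde\beta(s) := r\cos(\pi H)/(\pi s)$ and define $\tilde\delta_k$ by the same recursion as $\delta_k$ with $\tilde\beta$ in place of $\beta$. For $t\ge N$, every argument of $\beta$ appearing in $\delta_k(t,u,v)$ is at least $t\ge N$, so $\delta_k\le\tilde\delta_k$. Induction on $k$ shows $\tilde\delta_k$ is decreasing in both $u$ and $v$, so $\tilde\delta_k(t,u,v)\le\tilde\delta_k(t,0,0)$. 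Rescaling $w_j=tw_j'$ at each stage of the iterated integral, one finds $\tilde\delta_k(t,0,tv)=t^{-1}\{r\cos(\pi H)\}^k h_k(v)$ where $h_k(v)=\int_0^\infty h_{k-1}(w)/[\pi(1+v+w)]\,dw$. Evaluated at $v=0$, this recursion coincides with the defining recursion of $f_k(0)$, yielding $\tilde\delta_k(t,0,0) = t^{-1}\{r\cos(\pi H)\}^k f_k(0)$.

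For part (b), I would use the refined form $t\beta(t)\to\cos(\pi H)/\pi$ together with dominated convergence. In the iterated-integral expression for $\delta_k(t,tu,v)$, substitute $w_j = tw_j'$; since $v$ is fixed, $v/t\to 0$, while the $tu$ in the outermost argument persists. The bound from part (a) supplies an integrable majorant uniform in $t\ge N$ (integrable because $f_k(0)<\infty$), and each $t\beta(t(\cdot))$ factor converges pointwise to $\cos(\pi H)/[\pi(\cdot)]$. Passing to the limit and reorganizing the resulting kernel via the recursion $f_k(u)=\int f_{k-1}(u+v)/[\pi(1+v)]\,dv$ gives $\delta_k(t,tu,v)\sim t^{-1} f_k(u)\cos^k(\pi H)$.

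The main obstacle is step (b): justifying dominated convergence requires the uniform integrable majorant from part (a), which is why I would prove (a) first and use it directly. A secondary subtlety is recognizing that the limiting iterated integral obtained after rescaling reproduces the $f_k$ recursion at the relevant point; this rests on the same scaling calculation that delivered $f_k(0)$ in part (a).
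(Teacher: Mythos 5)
The paper does not actually prove this proposition; it only remarks that the argument is the same as for Proposition 3.2 of Inoue and Kasahara (2006). Your architecture---first establish $\beta(t)\sim\cos(\pi H)/(\pi t)$ from the asymptotics of $c$ and $a$ via the Beta integral $B(\tfrac12+H,1)$ and the reflection formula, then dominate $\delta_k$ by the iterates of the explicit decreasing kernel $r\cos(\pi H)/(\pi s)$, then pass to the limit---is certainly the intended one, and your computation of the constant $\cos(\pi H)/\pi$ is correct.

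The gap sits exactly in the step you call a ``secondary subtlety''. Unrolling the recursion gives the chain
\[
\delta_k(t,u,v)=\int_{(0,\infty)^{k-1}}\beta(t+v+w_1)\,\beta(t+w_1+w_2)\cdots\beta(t+w_{k-2}+w_{k-1})\,\beta(t+u+w_{k-1})\,dw_1\cdots dw_{k-1},
\]
with $u$ and $v$ at opposite ends, so after rescaling $w_j=tw_j'$ the limit of $t\,\delta_k(t,tu,v)\cos^{-k}(\pi H)$ is the chain integral
\[
L_k(u):=\pi^{-k}\int_{(0,\infty)^{k-1}}\frac{dw_1\cdots dw_{k-1}}{(1+w_1)(1+w_1+w_2)\cdots(1+w_{k-2}+w_{k-1})(1+u+w_{k-1})},
\]
which satisfies $L_k(u)=\int_0^\infty L_{k-1}(w)\{\pi(1+u+w)\}^{-1}dw$. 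That is a \emph{different} recursion from the one defining $f_k$ in the statement, $f_k(u)=\int_0^\infty f_{k-1}(u+v)\{\pi(1+v)\}^{-1}dv$, which unrolls to the ``star'' integral $\pi^{-k}\int (1+u+\sum_j v_j)^{-1}\prod_j(1+v_j)^{-1}dv$. The two coincide for $k\le 2$, but not in general: for $k=3$ one computes $\pi^3L_3(1)=\int_0^\infty \log(1+w)\{w(2+w)\}^{-1}dw=\pi^2/8$, while $\pi^3f_3(1)=\int_1^\infty x^{-2}\log(1+x)\,dx=2\log2$. So ``reorganizing the resulting kernel via the $f_k$ recursion'' is not a formality---as stated it is false, and the same issue affects part (a), where your scaling argument delivers $L_k(0)$ rather than $f_k(0)$ (these agree for $k\le3$ but not beyond). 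You must either prove the identity you are implicitly using or recognize that the limit obeys the other recursion; resolving this (it points to how the recursion in the statement must be read) is the real content of the omitted proof. A smaller, fixable point: in part (b) the constant-in-$(u,v)$ bound from part (a) is \emph{not} an integrable majorant in the one-step recursion $\delta_k=\int_0^\infty\beta(t+v+w)\delta_{k-1}(t,tu,w)\,dw$, because $\int_0^\infty\beta(t+w)\,dw=\infty$; you need the $w$-decaying bound $\tilde\delta_{k-1}(t,0,w)$ that your own comparison argument already supplies.
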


This can be proved in the same as in 
Inoue and Kasahara (2006, Proposition 3.2); we omit the proof.


\section{Baxter's inequality}\label{sec:4}

In this section, we prove Baxter's inequality (B). 
Let $(X(t))$, $\psi(s;T)$ and $\psi(s;T,t)$ be as in Section 3. 
Since $a$ is decreasing, we have 
$a(T+t)\int_0^Tc(v)dv\le\psi(t;T)\le a(t)\int_0^Tc(v)dv$, 
so that (\ref{eq:3.1}) implies
\begin{equation}
\psi(t;T)\sim a(t)\int_0^Tc(v)dv
\sim 
\frac{t^{-(\frac32+H)}}{\ell(t)}
\cdot\frac{(\frac12+H)}{\Gamma(\frac12-H)}\int_0^Tc(v)dv 
\qquad (t\to\infty).
\label{eq:4.1}
\end{equation}

Here is the extension of Lemma \ref{lem:2.1} to $(X(t))$.

\begin{lem}\label{lem:4.1}
Lemma \ref{lem:2.1} with $\psi_0(s;T,t)$ and $\psi_0(s;T)$ 
replaced by $\psi(s;T,t)$ and $\psi(s;T)$, respectively, holds.
\end{lem}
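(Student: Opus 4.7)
The plan is to apply the representation (\ref{eq:3.2}) term by term. Setting $I_k(t):=\int_0^t b_k(s;T,t)g(s)\,ds$ and $J_k(t):=\int_0^t b_k(t-s;T,t)g(s)\,ds$, we get
\begin{equation*}
\int_0^t \{\psi(s;T,t) - \psi(s;T)\} g(s)\,ds = \sum_{k=1}^\infty \{J_{2k}(t) + I_{2k+1}(t)\}.
\end{equation*}
For $k\ge 2$, I would expand $b_k$ via Proposition \ref{prop:3.2} and rescale by $s=ts'$, $u=tu'$. Combining the regular variation of $a$ from (\ref{eq:3.1}), the asymptotic $\psi(t;T)\sim a(t)\int_0^Tc(v)dv$ from (\ref{eq:4.1}), and the pointwise limit $\delta_{k-1}(t,tu',v)\sim t^{-1}f_{k-1}(u')\cos^{k-1}(\pi H)$ from Proposition \ref{prop:3.3}(b), the expected asymptotic is
\begin{equation*}
b_k(ts';T,t) \sim \psi(t;T)\,\cos^{k-1}(\pi H)\int_0^\infty\frac{f_{k-1}(u')}{(s'+u')^{3/2+H}}\,du'
\qquad (t\to\infty).
\end{equation*}

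Next, I would plug this into $I_k(t)$ and $J_k(t)$ and use (\ref{eq:2.1}) together with the uniform bounds from Proposition \ref{prop:3.3}(a) to justify dominated convergence, arriving at
\begin{equation*}
\frac{I_k(t)}{t\,\psi(t;T)\,g(t)}\to\cos^{k-1}(\pi H)A_k,
\qquad
\frac{J_k(t)}{t\,\psi(t;T)\,g(t)}\to\cos^{k-1}(\pi H)B_k,
\end{equation*}
where $A_k:=\int_0^1s^{\rho}\int_0^\infty f_{k-1}(u)(s+u)^{-(3/2+H)}\,du\,ds$ and $B_k$ is the same integral with $s^{\rho}$ replaced by $(1-s)^{\rho}$. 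To interchange the limit and the sum over $k$, I would use Proposition \ref{prop:3.3}(a) with the choice $r\in(1,1/\cos(\pi H))$, which is available because $0<\cos(\pi H)<1$ for $H\in(0,1/2)$; this yields a summable geometric majorant in $k$.

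The main obstacle will be verifying the identity
\begin{equation*}
\sum_{k=1}^\infty\{\cos^{2k-1}(\pi H)B_{2k}+\cos^{2k}(\pi H)A_{2k+1}\}
=\frac{C(H,\rho)}{\tfrac12+H-\rho}.
\end{equation*}
The left-hand side depends only on $H$ and $\rho$ (through the recursion for $f_k$ and the exponent $3/2+H$), not on $\nu$ or the slowly varying factor $\ell$. I would therefore pin down its value in two complementary ways: first, by specialising the whole computation to the FBM case (Example \ref{exmp:3.1}), where Lemma \ref{lem:2.1} forces the series to equal $C(H,\rho)/(\tfrac12+H-\rho)$; second, by direct Mellin- or beta-function manipulation using the recursive definition of $f_k$, which should yield the identity in closed form and provide an independent verification.
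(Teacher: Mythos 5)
Your proposal follows essentially the same route as the paper's proof: decompose via (\ref{eq:3.2}), expand $b_k$ with Proposition \ref{prop:3.2}, rescale and pass to the limit using (\ref{eq:3.1}), (\ref{eq:4.1}), Proposition \ref{prop:3.3} and dominated convergence, and then identify the limiting constant as $C(H,\rho)/(\tfrac12+H-\rho)$ by specialising to the fractional Brownian motion case and invoking Lemma \ref{lem:2.1} --- exactly what the paper does. The only small caveat is that the summability in $k$ of the majorant $f_k(0)x^k$ is not literally geometric (since $f_k(0)$ depends on $k$) and requires the cited result $\sum_k f_k(0)x^k<\infty$ from Inoue and Kasahara (2006, Lemma 3.1).
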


\begin{proof}
For $t$ large enough, 
using (\ref{eq:3.2}), we may write
\[
\begin{aligned}
D(t)&:=\frac1{t\,\psi(t;T)g(t)}
\int_0^t\{\psi(s;T,t)-\psi(s;T)\}g(s)ds
=\int_0^1\frac{\psi(ts;T,t)-\psi(ts;T)}{\psi(t;T)}
\frac{g(ts)}{g(t)}ds\\
&=\sum_{k=1}^{\infty}\int_0^1
\frac{b_{2k}(ts;T,t)}{\psi(t;T)}\frac{g(t(1-s))}{g(t)}ds
+\sum_{k=1}^{\infty}\int_0^1
\frac{b_{2k+1}(ts;T,t)}{\psi(t;T)}\frac{g(ts)}{g(t)}ds
\end{aligned}
\]
and
\[
\begin{split}
\frac{b_k(ts;T,t)}{\psi(t;T)}
&=\frac{a(t)}{\psi(t;T)}
\int_{0}^Tc(T-v)dv\int_{0}^{\infty}\frac{a(ts+u)}{a(t)}\,
\delta_{k-1}(t,u,v)du\\
&=\frac{a(t)}{\psi(t;T)}
\int_{0}^Tc(T-v)dv
\int_{0}^{\infty}\frac{a(t(s+u))}{a(t)}
\cdot
t\,\delta_k(t,tu,v)du.
\end{split}
\]
Put $\delta=\frac13\min\{\frac12-H,\frac12-H+\rho\}>0$.
By (\ref{eq:3.1}), we have $a\in R_{-(3/2)-H}$, and
\[
a(t\lambda)/a(t)
\leq
2\lambda^{-\frac32-H-\delta}
\mbox{ for }
0<\lambda<1,
\quad
\leq 2\lambda^{-\frac32}
\mbox{ for }
\lambda>1
\qquad
(\mbox{$t$ large enough)}
\]
(cf.~Bingham et al., 1989, Theorems 1.5.2 and 1.5.6).
Choose $0<r<1/\cos(\pi H)$ so that
$x:=r\cos(\pi k)\in(0,1)$. 
Then, by Proposition \ref{prop:3.3} (a),
we have for $0<s<1$ and $v>0$,
\[
\begin{split}
\int_{0}^{\infty}\frac{a(t(s+u))}{a(t)}
\cdot
t\,\delta_k(t,tu,v)du
&\leq 2 f_k(0)x^k
\left[\int_0^{1-s}\frac{du}{(s+u)^{\frac32+H+\delta}}
+\int_{1-s}^\infty\frac{du}{(s+u)^{-\frac32}}\right]\\
&\leq 2 f_k(0)x^k
\left[\frac{s^{-\frac12-H-\delta}}{\frac12+H+\delta}
+2\right]
\qquad(\mbox{$t$ large enough)}.
\end{split}
\]
By Inoue and Kasahara (2006, Lemma 3.1), 
$\sum_{k=0}^\infty f_k(0)x^k<\infty$. 
From these facts as well as (\ref{eq:2.1}), (\ref{eq:4.1}), 
Proposition \ref{prop:3.3} (b) and the dominated
convergence theorem, we see that $\lim_{t\to\infty}D(t)=D$, 
where
\[
\begin{split}
D&:=\sum_{k=1}^{\infty}\cos^{2k-1}(\pi H)
\int_0^1
\left\{
\int_0^\infty\frac{f_{2k-1}(u)}{(s+u)^{\frac32+H}}du\right\}
(1-s)^\rho ds\\
&\qquad\qquad\qquad 
+\sum_{k=1}^{\infty}\cos^{2k}(\pi H)
\int_0^1
\left\{\int_0^\infty\frac{f_{2k}(u)}{(s+u)^{\frac32+H}}du\right\}
s^\rho ds.
\end{split}
\]
Since $(B_H(t))$ is a special case of $(X(t))$, 
this also holds for $\psi_0(t;T)$ and $\psi_0(s;T,t)$. 
Therefore, from 
Lemma \ref{lem:2.1}, we conclude that 
$D=C(H,\rho)/(\frac12+H-\rho)$. 
Thus the lemma follows.
\end{proof}

Following theorems are the conclusion of this paper.

\begin{thm}\label{thm:4.2}
Let $g$ be locally bounded in 
$[0,\infty)$ and $g\in R_{\rho}$ with 
$\rho\in(-\frac12+H,\frac12+H)$.
\begin{enumerate}
\item[\rm (a)]
For fixed $T>0$, we have
\[
\int_0^t\{\psi(s,T;t)-\psi(s;T)\}g(s)ds
\sim C(H,\rho)\int_t^\infty\psi(s;T)g(s)ds
\qquad
(t\to\infty).
\]
\item[\rm (b)]
There exists a positive constant $M$ such that
\[
\int_0^t\{\psi(s,T;t)-\psi(s;T)\}g(s)ds
\leq
M\int_t^\infty\psi(s;T)g(s)ds
\qquad
(t>1).
\]
\end{enumerate}
\end{thm}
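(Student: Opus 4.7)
The plan is to derive (a) directly from Lemma \ref{lem:4.1} via a single application of Karamata's theorem, and then to bootstrap (a) into the uniform inequality (b) by a compactness argument on a bounded range of~$t$.

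For (a): by (\ref{eq:4.1}) we have $\psi(\cdot;T)\in R_{-3/2-H}$, and since $g\in R_\rho$ the product satisfies $\psi(\cdot;T)\,g\in R_{\rho-3/2-H}$. The upper hypothesis $\rho<\tfrac12+H$ makes this exponent strictly less than $-1$, so Karamata's theorem (Bingham et al., 1989) yields
\[
\int_t^\infty\psi(s;T)g(s)\,ds
\sim \frac{t\,\psi(t;T)g(t)}{\tfrac12+H-\rho}
\qquad(t\to\infty).
\]
Dividing the asymptotic from Lemma \ref{lem:4.1} by this one cancels the common factor $t\,\psi(t;T)g(t)/(\tfrac12+H-\rho)$ and gives the equivalence in (a) with constant exactly $C(H,\rho)$.

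For (b): let $L(t)$ and $R(t)$ denote the two sides of the desired inequality (without the $M$). Part~(a) says $L(t)/R(t)\to C(H,\rho)$ as $t\to\infty$, so there exist $N>1$ and $M_1>0$ with $L(t)\le M_1 R(t)$ for all $t\ge N$. On the remaining range $t\in(1,N]$ the denominator $R(t)$ is nonincreasing and hence bounded below by $R(N)>0$ (positivity uses that $g$ is eventually positive), so it suffices to bound $L(t)$ from above on $[1,N]$; then $M:=\max\bigl(M_1,\,\sup_{t\in[1,N]}L(t)/R(N)\bigr)$ does the job. (If $L(t)<0$ at some $t$ in this range the inequality is trivially valid there.) The natural route to the required upper bound---and in fact to the continuity of $L$---is to apply dominated convergence to the series representation
\[
L(t)=\sum_{k=1}^{\infty}\int_0^t\bigl\{b_{2k}(t-s;T,t)+b_{2k+1}(s;T,t)\bigr\}g(s)\,ds
\]
obtained from (\ref{eq:3.2}) and Proposition \ref{prop:3.2}. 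The majorants come from the same ingredients used in the proof of Lemma \ref{lem:4.1}: Proposition \ref{prop:3.3}(a) to control $\delta_k$ uniformly in $t$, the regular-variation bounds on $a(\cdot)$ coming from (\ref{eq:3.1}), and the summability $\sum_k f_k(0)\,x^k<\infty$ for $x=r\cos(\pi H)<1$. Producing an integrable majorant that is uniform in $t$ on the compact interval $[1,N]$ is the technical heart of the argument, and it is the step I expect to be the main obstacle.
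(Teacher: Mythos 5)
Your argument for (a) is precisely the paper's: Karamata's theorem applied to $\psi(\cdot;T)g\in R_{\rho-\frac32-H}$ (integrable tail because $\rho<\frac12+H$), then cancellation against the asymptotic of Lemma \ref{lem:4.1}. For (b) the paper says only that it ``follows from (a)'', so your large-$t$/compact-range split is if anything more explicit than the source; the uniform bound of the left-hand side on $[1,N]$ (and the positive lower bound of the right-hand side there, which tacitly needs $g\ge0$ on the whole half-line, as in the intended application $g(s)=\Vert X(t_1-s)\Vert$) is genuinely required but is not supplied by the paper either.
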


\begin{proof}
By (\ref{eq:4.1}), the function 
$\psi(s;T)g(s)$ in $s$ belongs to $R_{\rho-\frac{3}{2}-H}$. 
Since $\rho<\frac12+H$, we have
\[
\int_t^\infty\psi(s;T)g(s)ds
\sim \frac1{\frac12+H-\rho}t\psi(t;T)g(t)
\qquad (t\to\infty).
\]
The assertion (a) follows from this and Lemma 4.1,  while (b)
from (a). 
\end{proof}

\begin{thm}\label{thm:4.3}
\begin{enumerate}
\item[{\rm (a)}] 
Baxter's inequality {\rm (B)} holds.
\item[{\rm (b)}]
For fixed $T>0$, we have, as $t\to\infty$,
\[
\int_0^t\{\psi(s,T;t)-\psi(s;T)\}\Vert X(t_1-s)\Vert ds
\sim C(H,H)\frac{1+2H}{\Gamma(\frac12 - H)}
\left(\int_0^Tc(v)dv\right)
\Vert B_H(1)\Vert\cdot t^{-\frac{1}{2}}.
\]
\end{enumerate}
\end{thm}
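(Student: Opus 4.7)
The plan is to derive both parts from Theorem \ref{thm:4.2} applied with $g(s) = \Vert X(t_1-s)\Vert$. Making this legitimate requires one preparatory step: identifying the tail behavior of $\Vert X(t)\Vert$ as $t\to\infty$. Specifically, I would first establish
\[
\Vert X(t)\Vert \sim t^{H}\ell(t)\Vert B_H(1)\Vert
\qquad (t\to\infty).
\]
Since $(X(t))$ has stationary increments with $X(0)=0$, one has $\Vert X(-t)\Vert = \Vert X(t)\Vert$, and the moving-average representation together with the vanishing of $c$ on $(-\infty,0]$ yields, for $t>0$,
\[
\Vert X(t)\Vert^{2} = \int_{0}^{t} c(v)^{2}\,dv + \int_{0}^{\infty}\{c(t+u)-c(u)\}^{2}\,du.
\]
I would treat the first integral by Karamata's theorem (valid because $c^{2}$ is regularly varying of index $2H-1>-1$ at infinity and $c(v)^{2}=O(v^{2q})$ with $2q>-1$ near zero); the second I would attack by the substitution $u=tv$, slow variation $\ell(tv)/\ell(t)\to 1$ pointwise, and dominated convergence, splitting the $v$-integration into a region near $0$ (controlled by the near-zero bound on $c$), a bounded region, and a region where $v$ is large (controlled by $(1+v)^{H-\frac12}-v^{H-\frac12}=O(v^{H-\frac32})$). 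Adding the two limits and comparing with the corresponding fractional-Brownian calculation (where $\ell\equiv1$) identifies the combined constant as $\Vert B_H(1)\Vert^{2}$. In particular $s\mapsto\Vert X(t_1-s)\Vert$ is continuous, locally bounded on $[0,\infty)$, and belongs to $R_{H}$.

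With this asymptotic in hand, part (a) is immediate from Theorem \ref{thm:4.2}(b) applied with $\rho=H$ (the condition $H\in(-\tfrac12+H,\tfrac12+H)$ being trivial); a minor enlargement of the constant $M$ takes care of any finite range of $t$ in $[t_{1},\max(t_{1},2)]$ not already covered by the hypothesis $t>1$ in Theorem \ref{thm:4.2}(b).

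For part (b), Theorem \ref{thm:4.2}(a) reduces the claim to computing the asymptotic of $\int_{t}^{\infty}\psi(s;T)\Vert X(t_1-s)\Vert\,ds$. Multiplying (\ref{eq:4.1}) by the preparatory asymptotic above, the slowly varying factor $\ell$ cancels and
\[
\psi(s;T)\Vert X(t_1-s)\Vert \sim \frac{\tfrac12+H}{\Gamma(\tfrac12-H)}\left(\int_{0}^{T}c(v)\,dv\right)\Vert B_H(1)\Vert\cdot s^{-3/2}
\qquad (s\to\infty).
\]
A second application of Karamata's theorem then gives
\[
\int_{t}^{\infty}\psi(s;T)\Vert X(t_1-s)\Vert\,ds \sim \frac{1+2H}{\Gamma(\tfrac12-H)}\left(\int_{0}^{T}c(v)\,dv\right)\Vert B_H(1)\Vert\cdot t^{-1/2},
\]
and multiplication by $C(H,H)$ yields the formula stated in (b).

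The principal technical obstacle I anticipate is the preparatory tail asymptotic for $\Vert X(t)\Vert$: the only assumptions on $c$ near the origin are the weak $c(t)=O(t^{q})$ with $q>-1/2$, so the dominated-convergence argument for the cross-term integral $\int_{0}^{\infty}\{c(t+u)-c(u)\}^{2}\,du$ requires a careful case split on the rescaled variable $v=u/t$. Once that step is secured, both parts of the theorem reduce to invocations of Theorem \ref{thm:4.2} together with routine Karamata bookkeeping, with the pleasant cancellation of $\ell$ being the reason the exponent $-1/2$ in (b) is independent of $H$ and $\ell$, exactly as observed in Remark \ref{rem:2.2}.
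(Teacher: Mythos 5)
Your proposal is correct and follows essentially the same route as the paper: both parts are reduced to Theorem \ref{thm:4.2} applied with $g(s)=\Vert X(t_1-s)\Vert\in R_H$. The only differences are that the paper simply cites Inoue and Anh (2007, Lemma 2.7) for the asymptotic $\Vert X(t)\Vert\sim\Vert B_H(1)\Vert\,t^H\ell(t)$ which you re-derive from the moving-average representation, and that for part (b) the paper invokes Lemma \ref{lem:4.1} together with (\ref{eq:4.1}) directly rather than passing through Theorem \ref{thm:4.2} (a) and Karamata's theorem --- an equivalent piece of bookkeeping leading to the same constant $C(H,H)(1+2H)/\Gamma(\tfrac12-H)$.
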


\begin{proof}
By Inoue and Anh (2007, Lemma 2.7), 
$\Vert X(t)\Vert\sim\Vert B_H(1)\Vert\,t^H\ell(t)$ as 
$t\to\infty$. 
So, (a) follows from Theorem \ref{thm:4.2} (b) 
if we put
$g(s):=\Vert X(t_1-s)\Vert=\Vert X(s-t_1)\Vert$. 
Also, (b) follows from Lemma \ref{lem:4.1} and 
(\ref{eq:4.1}).
\end{proof}




\end{document}